\DeclareFontFamily{OML}{rsfs}{\skewchar\font'177}
\DeclareFontShape{OML}{rsfs}{m}{n}{ <5> <6> rsfs5 <7> <8> <9> rsfs7
  <10> <10.95> <12> <14.4> <17.28> <20.74> <24.88> rsfs10 }{}
\DeclareMathAlphabet{\mathfs}{OML}{rsfs}{m}{n}
\newcommand{\bae}{\begin{equation}\begin{aligned}}
\newcommand{\eae}{\end{aligned}\end{equation}}
\newcommand{\Z}{\mathbb{Z}}
\newcommand{\N}{\mathbb{N}}
\newtheorem{thm}{Theorem}[section]
\newtheorem{prop}[thm]{Proposition}
\newtheorem{lem}[thm]{Lemma}
\newtheorem{definition}{Definition}[section]
\newtheorem{exm}{Example}
\newtheorem{rmk}{Remark}
\begin{document}

\numberwithin{equation}{section}
\numberwithin{figure}{section}
\title{Harmonic Labeling of Graphs}
\author{Itai Benjamini, Van Cyr, Eviatar B. Procaccia, Ran J. Tessler}
\maketitle

\begin{abstract}
Which graphs admit an integer value harmonic function which is  injective and surjective onto $\Z$?
Such a function, which we call harmonic labeling, is constructed when the graph is the $\Z^2$ square grid.
It is shown that for any finite graph $G$ containing at least one edge, there is no harmonic labeling of $ G \times \Z$.
\end{abstract}

\section{Introduction}

Let $G=(V,E)$ be an infinite graph of bounded degree.
\begin{definition}
A function $\phi:V(G)\rightarrow\mathbb{Z}$ is harmonic if
\[
\forall x\in V,\phi(x)=\frac{1}{\deg(x)}\sum_{y\sim x}\phi(y).
\]
If $\phi$ is also injective and surjective we say that $\phi$ is a {\bf harmonic labeling} of $G$.
\end{definition}

In this note we study the somewhat ad-hoc question, which graphs admit harmonic labeling?
In view of some of the techniques used, we hope that a nice structure will emerge,
making this project natural and interesting. The study  linking between the behavior of harmonic functions on spaces
and geometric properties of the spaces, started with Liouville theorem, is well developed, see e.g. \cite{yau1994lectures}. Here we want to suggest a variant in the spirit of additive combinatorics over graphs, see e.g. \cite{alon2009sums}.
\medskip

In the coming sections we will prove some positive and negative results regarding the existence of harmonic labeling. We are still far from classifying Cayley graphs or vertex transitive graphs, with respect to this property or understanding  the structure of such functions. In particular we don't know if admitting harmonic labeling is a quasi isometric invariance for Cayley graphs? It will be of interest to verify on natural examples of Cayley graphs whether they admit harmonic labeling and come up with useful invariants. In the last section we list few open problems.
Before moving to trees, grids and products, here are two quick warm up examples.

\begin{exm}
The easiest example of harmonic labeling is when $V(G)=\mathbb{Z}$ and $E(G)=\{(i,j):|i-j|=1\}$. Let $\phi\equiv {\rm Id}$. Each vertex is of degree 2, and $i=\phi(i)=\frac{1}{2}(\phi(i+1)+\phi(i-1))=\frac{1}{2}(i+1+i-1)=i$. For a graphical representation of the $\mathbb{Z}$-labeling see Figure \ref{fig:zlabeling}. In the graphical representation we write the image of $\phi$ on the vertices of the graphs.
\begin{figure}[H]
\begin{center}
\includegraphics[width=0.7\textwidth]{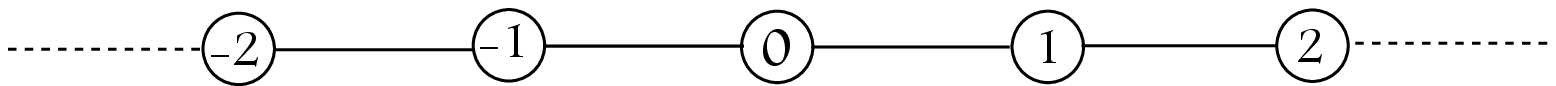}
\caption{Labeling on $\mathbb{Z}$\label{fig:zlabeling}}
\end{center}
\end{figure}
\end{exm}
The next example shows that the existence of harmonic labeling is not trivial.
\begin{exm}
Let $G$ be the infinite cross, as in Figure \ref{fig:crosslabeling}. If $\phi$ is a harmonic labeling then so are $\psi(x)=\phi(x)-\phi(0)$ and $\psi(x)=-\phi(x)$. Thus we can always assume zero value on some appointed vertex. Assume by contradiction that a harmonic labeling of the cross exists, and assume zero value at the intersection. From the harmonic property three values $a,~b,~c$ span the value of $\phi$ on the cross. We can assume that two of these are positive (if not look at $\psi(x)=-\phi(x)$). Without loss of generality assume $a$ and $b$ are positive. We will obtain the number $a\cdot b$ in both the left and the right arms of the cross contradicting the assumption $\phi$ is injective.
\begin{figure}
\begin{center}
\includegraphics[width=0.7\textwidth]{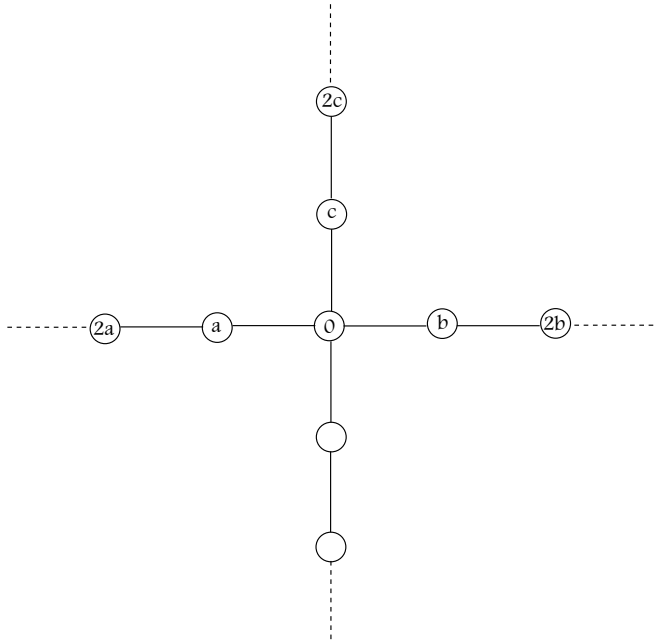}
\caption{No labeling on the cross\label{fig:crosslabeling}}
\end{center}
\end{figure}
\end{exm}

From the examples and the the study below, the notion of label spanning sets (Remark \ref{rmk:span}, Section \ref{sec:openprob}) emerges as a key concept. It is of interest to study and classify these sets.

\section{Trees}
\begin{thm}
Let $G$ be a $d$-regular tree, then there exists a labeling on $G$.
\end{thm}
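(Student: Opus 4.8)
The plan is a greedy level-by-level construction that exploits the exponential branching of $T_d$; I assume $d\ge 3$, since for $d=2$ the tree is $\Z$ and the identity map is already a harmonic labeling. Fix a root $o$, and for $n\ge 0$ let $B_n$ be the ball of radius $n$ about $o$ and $S_n=B_n\setminus B_{n-1}$ the sphere, so $B_0=\{o\}$, every $v\in S_n$ with $n\ge 1$ has a unique parent in $S_{n-1}$ and exactly $d-1$ children in $S_{n+1}$, while $o$ has $d$ children. I would construct nested injections $\phi_n\colon B_n\to\Z$ with $\phi_0(o)=0$, keeping the invariant that $\phi_n$ is harmonic at every vertex of $B_n$ whose neighbours all already lie in $B_n$ — i.e.\ at every vertex of $B_{n-1}$. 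Granting this, $\phi:=\bigcup_n\phi_n\colon V(T_d)\to\Z$ is a well-defined injection, and it is harmonic at every vertex $x$: if $x\in S_m$ then from $\phi_{m+1}$ onward all neighbours of $x$ are labeled and the mean-value identity at $x$ holds.

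For the inductive step, observe that passing from $\phi_n$ to $\phi_{n+1}$ introduces a new harmonicity requirement only at the vertices of $S_n$ (the vertices of $B_{n-1}$ were already complete and their labels are untouched, and those of $S_{n+1}$ are the new leaves): for $v\in S_n$ with parent $\pi$ and children $c_1,\dots,c_{d-1}$, harmonicity at $v$ reads $\sum_i\phi(c_i)=d\,\phi(v)-\phi(\pi)$, with the convention $\phi(\pi):=0$ and $d$ children when $v=o$. So building $\phi_{n+1}$ amounts to choosing, for each $v\in S_n$ in turn, integer labels on the $d-1\ge 2$ children of $v$ with a prescribed sum, avoiding the finitely many integers already used in $B_n$ or already placed at level $n+1$. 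Since the solutions of $x_1+\dots+x_{d-1}=s$ form an affine sublattice of $\Z^{d-1}$ of dimension $d-2\ge 1$, all but finitely many solution tuples have distinct entries lying outside any prescribed finite set, so such a choice always exists and $\phi_{n+1}$ stays injective.

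Surjectivity I would arrange by steering. Fix an enumeration $\Z=\{k_1,k_2,\dots\}$. When building $\phi_{n+1}$, if $k_n\notin\mathrm{im}(\phi_n)$ I would additionally demand that $k_n$ be the label of some child of some $v\in S_n$. For $d\ge 4$ this is immediate, because after pinning one child to $k_n$ the remaining $d-2\ge 2$ children still have a positive-dimensional lattice of admissible values; for $d=3$, where the second child's value is then forced, this needs a one-stage look-ahead — when the preceding sphere $S_n$ is labeled one chooses the value of a suitable $w\in S_n$ so that its later completion can legitimately hit $k_n$ — which is harmless but slightly fiddly. Either way each $k_n$ enters the image by stage $n+1$, so $\phi$ is onto; combined with the previous paragraphs, $\phi$ is an injective, surjective, harmonic function, i.e.\ a harmonic labeling of $T_d$.

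The main obstacle is exactly this last piece of bookkeeping: one must verify that reserving part of each sphere's labels to chase the surjectivity target never conflicts with the freshness-and-distinctness demands on the remaining labels of that sphere. The hypothesis $d\ge 3$ — at least two children at each non-root vertex — is precisely what supplies the necessary slack, and I would first record, as a small lemma, the purely arithmetic fact that $x_1+\dots+x_{d-1}=s$ always has a solution with pairwise distinct entries avoiding any prescribed finite set (and, when $d\ge 4$, with one entry prescribed), and then run the recursion on top of it.
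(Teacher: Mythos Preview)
Your proposal is correct and follows essentially the same greedy strategy as the paper: build the labeling recursively outward from a root, using the $d-1\ge 2$ children of each vertex as degrees of freedom to enforce both injectivity (choose fresh values summing to the required total) and surjectivity (target the least-magnitude unused integer at each step), with $d=3$ handled by an extra level of look-ahead. The only difference is organizational---the paper extends one parent vertex at a time rather than whole spheres, and for $d=3$ labels a new vertex together with its two children in a single step, which is exactly your look-ahead in different packaging.
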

\begin{proof}
We start with the four-regular tree. Observe Figure \ref{fig:4regulartree}.
\begin{figure}
\begin{center}
\includegraphics[width=0.5\textwidth]{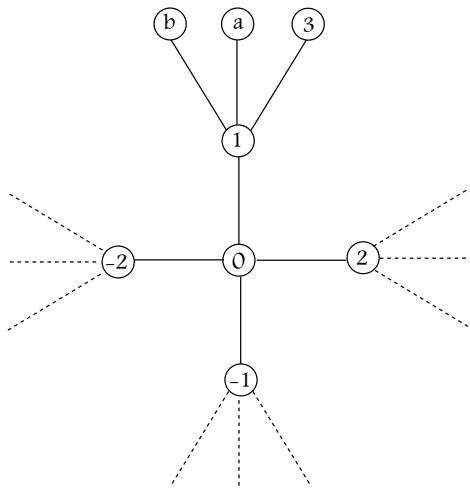}
\caption{Four-regular tree\label{fig:4regulartree}}
\end{center}
\end{figure}

We build $\phi:V(G)\rightarrow\mathbb{Z}$ recursively. Assume $\phi$ is defined on some subset $V'\subset V(G)$. Let \[m=\min_{z\in\mathbb{Z}}\{|z|:z\not\in\phi(V')\}.\], if $m\in\phi(V')$ take $-m$. Define $\phi$ to be $m$ on one of the free leaves on the boundary of $V'$. The parent of this vertex has two other children, denote by $f=\phi({\rm parent})$, $f=\phi({\rm parent~of~parent})$. Those children give us the required degree of freedom to make $\phi$ injective. Define $\phi$ on one of them to be a number $a$ much larger than \[\max_{v\in V'}|\phi(v)|.\] Thus all that is left is to show that the value of the second child $b$ is different than all the previous values of $\phi$.
\[
b=4*f-m-a-f'.
\]
It is easy to see that if $a$ is large enough $b$ is small enough and $\phi$ is injective. Surjectivity is obvious from construction.

Notice that for $d>4$ we will have more degrees of freedom, and the recursive construction will work as well. For the three-regular tree the previous construction fails since there are not enough degrees of freedom in each level of the tree. Thus the construction will use two levels in each step. Observe Figure \ref{fig:3regulartree}. We build $\phi$ recursively, very similar to the $d>3$ case. Assume $\phi$ is defined for some subset $V'\subset V(G)$. We wish to define $\phi$ on one of the leaves with minimal distance to the root. Let $m$ be as in the previous construction, a number with minimal absolute value that is not in $\phi(V')$. Define $\phi$ to be $b$ in the desired leaf, $m$ on one of its children and $b$ on the second child. Let $f=\phi({\rm parent})$, then $b=\frac{1}{3}(a+f+m)$. If $a$ is much larger than all the values in $\phi(V')$, $b$ will be much smaller and $\phi$ stays injective. Once again $\phi$ is surjective by construction.
\begin{figure}
\begin{center}
\includegraphics[width=0.7\textwidth]{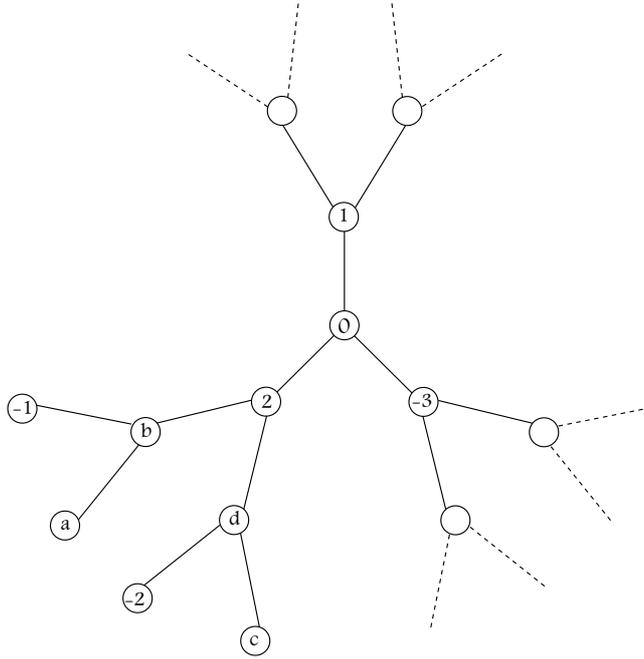}
\caption{Three-regular tree\label{fig:3regulartree}}
\end{center}
\end{figure}
\end{proof}

\begin{thm}\label{thm:everytree}
Every tree such that all vertices have degree greater than 2 has a harmonic labeling.
\end{thm}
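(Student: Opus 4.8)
The plan is to adapt the recursive ``degrees of freedom'' construction used for $d$-regular trees to an arbitrary tree $G$ with all degrees at least $3$. Fix a root $\rho\in V(G)$ and orient every edge away from $\rho$, so that each non-root vertex has one parent and $\deg(v)-1\ge 2$ children, while $\rho$ has $\deg(\rho)\ge 3$ children. I will build the labeling as an increasing union of partial labelings $\phi_n\colon V_n\to\Z$, where each $V_n$ is a finite subtree containing $\rho$ satisfying the closure invariant: if $v\in V_n$ has at least one child in $V_n$, then all children of $v$ lie in $V_n$ and the harmonic identity holds at $v$. Hence harmonicity can fail only at leaves of $V_n$. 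In addition I maintain that each $\phi_n$ is injective and, exactly as in the $d$-regular case, I make progress toward surjectivity by letting $m_n$ be an integer of least absolute value not in $\phi_n(V_n)$ (choosing its sign accordingly).

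At step $n$ I process the leaf $u$ of $V_n$ that is closest to $\rho$, breaking ties arbitrarily; this ``fair'' choice will guarantee $\bigcup_n V_n=V(G)$. Writing $f=\phi_n(u)$ and $f'=\phi_n(\mathrm{parent}(u))$ (the term $f'$ being absent if $u=\rho$), the harmonic identity at $u$ forces the values on the children of $u$ to sum to a constant $S$ determined by $V_n$. If $u$ has at least three children $c_1,\dots,c_k$ — which is automatic when $u=\rho$ or $\deg(u)\ge 4$ — I set $\phi(c_1)=m_n$, give $c_2,\dots,c_{k-1}$ large, well-separated positive values, and let $\phi(c_k)=S-m_n-\sum_{j=2}^{k-1}\phi(c_j)$ be whatever the identity at $u$ demands; taking the scale of the large values big enough makes $\phi(c_k)$ a very negative integer, so all $k$ new values are pairwise distinct and disjoint from $\phi_n(V_n)$, and I set $V_{n+1}=V_n\cup\{c_1,\dots,c_k\}$.

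The one tight case is a non-root vertex $u$ with $\deg(u)=3$: it has only two children $c_1,c_2$, and once one child's value is fixed the identity at $u$ leaves no freedom, so a naive step could be forced onto an already-used integer. Here I use the two-level maneuver from the three-regular tree: pick a large parameter $a$, set $\phi(c_1)=a$ and $\phi(c_2)=S-a$ (very negative), then descend to $c_1$, which has $\deg(c_1)-1\ge 2$ children $d_1,\dots,d_l$; put $\phi(d_1)=m_n$, give $d_2,\dots,d_{l-1}$ much larger well-separated values, and let $\phi(d_l)$ be forced by the identity at $c_1$. For all but finitely many choices of the large parameters all the newly assigned numbers are distinct from one another and from $\phi_n(V_n)$, and with $V_{n+1}=V_n\cup\{c_1,c_2,d_1,\dots,d_l\}$ the identity now holds at both $u$ and $c_1$, while $c_2,d_1,\dots,d_l$ are the new leaves, so the closure invariant persists. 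The initialization $V_0=\{\rho\}$, $\phi_0(\rho)=0$, and the first step at $u=\rho$ are handled by the previous paragraph, since $\rho$ has at least three children even when $\deg(\rho)=3$.

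Finally I will verify that $\phi:=\bigcup_n\phi_n$ is a harmonic labeling: it is well defined because the $\phi_n$ are consistent; it is harmonic because the fair processing order together with the finiteness of each extension makes every vertex eventually a non-leaf of some $V_n$; it is injective since each $\phi_n$ is; and it is surjective because if some $k\in\Z$ were never used, then $|m_n|\le|k|$ for all large $n$, impossible as only finitely many integers have absolute value at most $|k|$. The main obstacle is precisely the degree-$3$ vertices — ensuring the two-level trick both ``makes room'' for the target value $m_n$ and keeps the correct set of vertices as leaves — after which the remaining points (distinctness of the fresh values, exhaustion of $V(G)$, and surjectivity) are routine bookkeeping.
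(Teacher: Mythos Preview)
Your proof is correct and follows essentially the same recursive strategy as the paper: root the tree, process vertices outward, and at each step use the freedom in the children's values to insert the smallest missing integer while keeping the extension injective. In fact you are more careful than the paper's own write-up of Theorem~\ref{thm:everytree}: the paper's indexing there tacitly assumes each processed vertex has at least three children, whereas a non-root vertex of degree~$3$ has only two, leaving no ``big'' parameter to guarantee the forced child avoids collisions. You close this gap explicitly by importing the two-level trick from the $3$-regular tree argument, which is exactly what is needed. Your fair (closest-leaf-first) processing order, the closure invariant, and the surjectivity argument via $|m_n|$ are all sound.
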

\begin{proof}
Choose a vertex and denote it as the origin $\Gamma(0)$. Let \[\Gamma(n)=\{\Gamma(n)_i^j:i=1,\ldots,|\Gamma(n-1)|,j=1,\ldots,{\rm degree~ of~ common~ father}-1\}\] be the set of vertices of distance $n$ from the origin. We build the harmonic labeling recursively. Suppose we defined $\phi$ on $\Gamma(n-1)$ lets expand it to $\Gamma(n)$. Let $l$ be the degree of common father of $\Gamma(n)_1^j$, and let $m_1,\ldots,m_{l-2}$ be the numbers in $\N$ with smallest absolute value such that $m_1,\ldots,m_{l-2}\notin\cup_{i=0}^{n-1}\Gamma(i)$. Define $\Gamma(n)_1^j=m_j$ for $j=1,\ldots,l-2$, now define $\Gamma(n)_1^{l-1}$ be big enough such that both it and $\Gamma(n)_1^l$ be different than $\cup_{i=0}^{n}\Gamma(i)\setminus \Gamma(n)_1^{l-1}\cup\Gamma(n)_1^{l}$. We continue the same way for $\Gamma_i^j$, $i\neq1$.

\end{proof}

\section{Finite graphs }
\begin{thm}\label{thm:finite}
Let $G$ be a finite graph with at least one edge. The graph $G\times\mathbb{Z}$ does not admit harmonic labeling.
\end{thm}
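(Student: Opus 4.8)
Write $\phi_n:=\phi(\cdot,n)\in\mathbb{Z}^{V(G)}$ and let $L$ be the graph Laplacian of $G$. A direct computation turns harmonicity of $\phi$ on $G\times\mathbb{Z}$ into the linear recursion
\[
\phi_{n+1}+\phi_{n-1}=(L+2I)\,\phi_n\qquad(n\in\mathbb{Z}).
\]
Since $L$ is symmetric and positive semidefinite, I would split $\phi_n=\kappa_n+\psi_n$ along $\mathbb{R}^{V(G)}=\ker L\oplus\operatorname{range}L$. Projecting the recursion, $\kappa_{n+1}-2\kappa_n+\kappa_{n-1}=0$, so $\kappa_n=\kappa_0+n\delta$ is affine in $n$ with $\delta\in\ker L$; as $\ker L$ is spanned by the indicator vectors of the connected components of $G$, each of $\kappa_0,\delta$ is constant on every component. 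On a positive eigenspace $E_\lambda$ a coordinate evolves by $c_{n+1}+c_{n-1}=(2+\lambda)c_n$, whose characteristic roots are $\mu_\lambda^{\pm1}$ with $\mu_\lambda=\tfrac12\bigl(2+\lambda+\sqrt{\lambda^2+4\lambda}\bigr)>1$; hence $\psi_n$ is a finite real linear combination of the geometric sequences $n\mapsto\mu^{\pm n}$, $\mu>1$. Because $G$ has an edge, $\operatorname{range}L\neq0$ and at least one such $\mu$ occurs.

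Next I would analyse the fibre $n\mapsto\phi_n(v)$ for each $v$. The sequences $\{\mu^{\pm n}:\mu>1\}$ being linearly independent, $\psi_\cdot(v)$ either vanishes identically or, at one of the two ends $n\to\pm\infty$, tends to $\pm\infty$ at a genuine geometric rate. Call an end of the fibre \emph{exponential} if $|\phi_n(v)|\to\infty$ geometrically there, and \emph{affine} otherwise; in the affine case the growing part of $\psi_n(v)$ at that end vanishes, so $\phi_n(v)=\kappa_n(v)+o(1)$. Using $\phi_n(v)\in\mathbb{Z}$: if $\delta(v)=0$ then $\phi_n(v)\to\kappa_0(v)$ is bounded, hence eventually constant, contradicting injectivity; so $\delta(v)\neq0$, and then $\phi_n(v)-n\delta(v)\to\kappa_0(v)$ is an integer sequence, whence $\kappa_0(v)\in\mathbb{Z}$ and $\phi_n(v)=\kappa_0(v)+n\delta(v)$ for all large $|n|$ at that end, with $\delta(v)$ a nonzero integer. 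Thus every fibre has two ends, each exponential or affine; moreover, two fibres in a common component with an affine end at $n\to+\infty$ (resp. $n\to-\infty$) eventually agree there (they equal the same affine function, depending only on the component), so by injectivity each component carries at most one such fibre in each of the two directions.

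Now I would count. Put $N^{\pm}(M):=\#\{(v,n):0<\pm\phi(v,n)\le M\}$; injectivity gives $N^{\pm}(M)\le M$ and surjectivity gives $N^{\pm}(M)\ge M$, so $N^{\pm}(M)=M$. An exponential end contributes $O(\log M)$ to $N^{+}(M)$, an affine end along which $\phi_n(v)\to+\infty$ contributes $M/|\delta(v)|+O(1)$, and any other affine end contributes $O(1)$; summing over the finitely many ends,
\[
M=N^{+}(M)=\Bigl(\textstyle\sum_{\text{affine ends with }\phi\to+\infty}\tfrac1{|\delta(v)|}\Bigr)M+O(\log M),
\]
so that sum of reciprocal slopes equals $1$, and likewise for $N^{-}$. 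If no affine end exists at all then $N^{+}(M)=O(\log M)<M$ for large $M$, a contradiction; so affine ends do exist. The value sets of the affine ends pointing towards $+\infty$ are, up to finitely many elements, pairwise disjoint (by injectivity) rays of arithmetic progressions with common differences $|\delta(v)|$, and together with the $O(\log M)$ values from exponential ends they cover $[1,M]$; since there are only finitely many of them, the complement of the union of the underlying full progressions is periodic of density $1-\sum 1/|\delta(v)|=0$, hence empty, so those progressions tile $\mathbb{Z}$ and the rays cover $[K,\infty)$ for some $K$. Symmetrically, the affine ends pointing towards $-\infty$ cover some $(-\infty,-K']$.

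To conclude I would note that some exponential end must exist: otherwise every fibre equals $\kappa_0(v)+n\delta(v)$ for all $n$, and since $\kappa_0,\delta$ are constant on components, two vertices of a common component (one exists, as $G$ has an edge) carry identical fibres, contradicting injectivity. Pick an exponential end $e$ of a fibre $v$. If $\phi_n(v)\to+\infty$ along $e$, then for $|n|$ large along $e$ the value $\phi_n(v)\ge K$ lies in the range of some affine $+\infty$-pointing end of a fibre $v'$, forcing $\phi_n(v)=\phi_{n'}(v')$; the two ends involved are distinct (one exponential, one affine) so their index ranges are disjoint and $(v,n)\neq(v',n')$ — impossible by injectivity. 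If $\phi_n(v)\to-\infty$ along $e$, the same argument with $(-\infty,-K']$ applies. This contradiction proves the theorem. The main work lies in the middle steps: making the fibre dichotomy airtight (in particular ruling out bounded or oscillatory behaviour via integrality) and keeping exact track of how each of the finitely many fibre-ends feeds the counts $N^{\pm}(M)$, so that the tiling and collision arguments can be run cleanly; the recursion and the spectral splitting are routine.
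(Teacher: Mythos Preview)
Your argument is correct and follows essentially the same route as the paper: the spectral splitting of the recursion $\phi_{n+1}+\phi_{n-1}=(L+2I)\phi_n$ into an affine part (from $\ker L$) and geometric parts (from the positive eigenvalues), the integrality-forcing step, the density/counting argument showing the arithmetic progressions must have total density $1$ and hence tile $\mathbb{Z}$, and the final collision between an exponential fibre and the arithmetic cover are all the paper's ingredients. Your packaging is slightly leaner---you work directly with the second-order recursion and the decomposition $\ker L\oplus\operatorname{range}L$ instead of the paper's $2n\times 2n$ transfer matrix and its Jordan form, and you phrase density via the exact counts $N^{\pm}(M)$ and fibre \emph{ends} rather than whole lines---but the logical skeleton and the key lemma (finitely many arithmetic progressions of total density $1$ cover $\mathbb{Z}$) are identical.
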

As an example to this theorem we will show that there is no harmonic labeling for the ladder graph.
\begin{prop}
The graph $\mathbb{Z}\times\mathbb{Z}_2$ does not admit harmonic labeling.
\end{prop}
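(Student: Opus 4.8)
We parametrise the ladder $\Z\times\Z_2$ by setting $a_n=\phi(n,0)$ and $b_n=\phi(n,1)$ for $n\in\Z$. Every vertex has degree $3$, so harmonicity of $\phi$ is equivalent to the coupled recursions $3a_n=a_{n-1}+a_{n+1}+b_n$ and $3b_n=b_{n-1}+b_{n+1}+a_n$. The plan is to decouple this system by passing to $s_n:=a_n+b_n$ and $d_n:=a_n-b_n$. Adding the two relations gives $2s_n=s_{n-1}+s_{n+1}$, so $s_n$ is harmonic on $\Z$ and hence $s_n=\alpha n+\beta$ is an arithmetic progression. Subtracting them gives $d_{n+1}=4d_n-d_{n-1}$, a linear recursion with characteristic roots $2\pm\sqrt3$, so $d_n=A(2+\sqrt3)^n+B(2-\sqrt3)^n$ for some $A,B\in\mathbb{R}$.

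Next I would bring in integrality. Since $\phi$ takes values in $\Z$, each $d_n=a_n-b_n$ is an integer; expressing $A$ and $B$ through $d_0,d_1\in\Z$ and invoking the irrationality of $\sqrt3$, one sees that either $A=B=0$ or both are nonzero. If $A=B=0$ then $d_n\equiv0$, i.e.\ $\phi(n,0)=\phi(n,1)$ for all $n$, contradicting injectivity; so assume $A\neq0\neq B$. Using $2+\sqrt3>1>2-\sqrt3>0$ and $(2+\sqrt3)(2-\sqrt3)=1$, the term $A(2+\sqrt3)^n$ dominates as $n\to+\infty$ and $B(2-\sqrt3)^n=B(2+\sqrt3)^{-n}$ dominates as $n\to-\infty$, so $|d_n|$ grows at least geometrically in $|n|$ in both directions.

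The final step is to convert this growth into a contradiction with surjectivity by a counting argument. From $a_n=\tfrac12(s_n+d_n)$, $b_n=\tfrac12(s_n-d_n)$ and the merely linear growth of $s_n$, whenever $|d_n|>|\alpha n+\beta|+2N$ we get $|a_n|,|b_n|>N$; since $|d_n|$ grows geometrically while the right-hand side is linear in $n$, there is $M=M(N)=O(\log N)$ so that every vertex whose $\phi$-value lies in $\{-N,\dots,N\}$ sits on a rung $n$ with $|n|\le M$. Hence at most $2(2M+1)=O(\log N)$ vertices receive a value in $\{-N,\dots,N\}$, while injectivity together with surjectivity forces this count to equal $2N+1$; letting $N\to\infty$ yields the contradiction. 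I expect the only subtle point to be this last step --- in particular the need to rule out, for large $|n|$, rungs carrying exactly one small value, which is precisely what the comparison of $|d_n|$ with $|s_n|+2N$ achieves; the recursion bookkeeping in the earlier steps is routine.
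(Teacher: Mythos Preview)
Your argument is correct. The decoupling into $s_n=a_n+b_n$ and $d_n=a_n-b_n$ is exactly the eigenbasis of the Laplacian of the edge $K_2$, so what you have written is the specialisation to the ladder of the spectral proof the paper gives later for the general Theorem on $G\times\Z$: the $s_n$-part is the Jordan block at eigenvalue $1$ (arithmetic progression), the $d_n$-part corresponds to the eigenvalue pair $2\pm\sqrt3$, and your density/counting contradiction is the density argument there. The paper's own proof of this particular Proposition is quite different and more ad hoc: it never diagonalises, but instead argues directly that $\max\{a_k,b_k\}$ (and dually $\min$) increases by at least $3$ at every step, so along each half of the ladder the integers hit have upper density at most $2/3$, contradicting surjectivity. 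Your route is cleaner and immediately generalises; the paper's route is elementary and avoids solving the recursion, at the cost of some case analysis. Either way the key quantitative fact is the same: exponential spread of $a_n-b_n$ forces too few small values to appear.
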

\begin{proof}
Assume by contradiction that there exists a harmonic labeling \[\phi:\mathbb{Z}\times\mathbb{Z}_2\rightarrow\mathbb{Z}.\]
In Figure \ref{fig:ladder} we denote by
\bae
\{a_i\}_{i\in\Z}&=\{\phi(i\times\{0\})\}_{i\in\Z}\\
\{b_i\}_{i\in\Z}&=\{\phi(i\times\{1\})\}_{i\in\Z}.
\eae
\begin{figure}
\begin{center}
\includegraphics[width=0.7\textwidth]{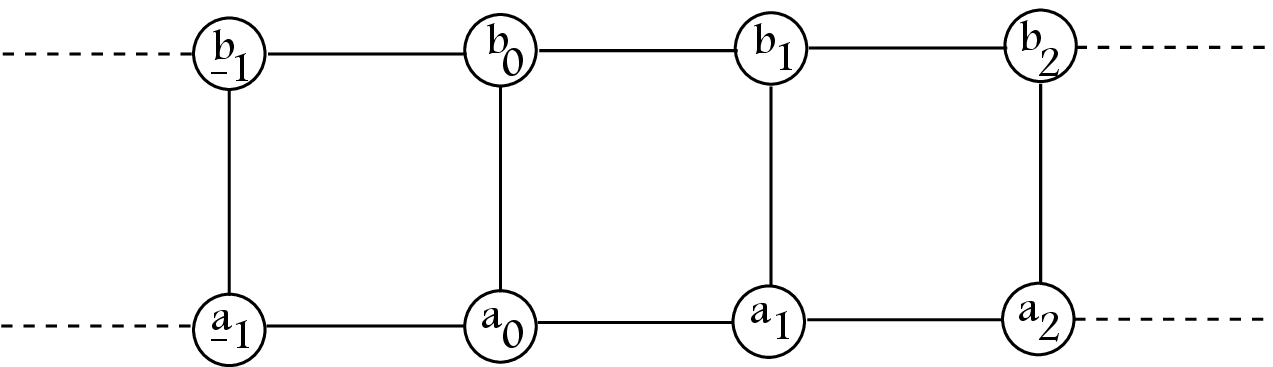}
\caption{Ladder graph $\Z\times\Z_2$\label{fig:ladder}}
\end{center}
\end{figure}
By the harmonic property of the labeling, $\phi$ is determined by its values $a_0,a_1,b_0,b_1$. Without loss of generality assume $a_1=\max\{a_0,a_1,b_0,b_1\}$. Since we assumed that $\phi$ is injective, $b_1$ and $a_0$ are smaller than $a_1$ by at least $1$ and $2$. This yields
\bae
a_2=3a_1-b_1-a_0\geq 3a_1-(a_1-1)-(a_1-2)=a_1+3.
\eae
This means that either $a_2$ or $b_2$ are the maximum of $\{a_i,b_i\}_{i=0}^{2}$ and greater than $a_1+3$. By induction we get that for every $k\in\mathbb{N}\setminus\{0\}$ either $a_k$ or $b_k$ are the maximum of $\{a_i,b_i\}_{i=0}^{k}$ and greater than $\max\{a_i,b_i\}_{i=0}^{k-1}+3$. Assume that at least one of the next limits are true $\liminf_{n\rightarrow\infty}a_n=-\infty$, $\liminf_{n\rightarrow\infty}b_n=-\infty$. Without loss of generality assume the second limit is true. There is some $N_1\in\mathbb{N}$ such that $b_{N_1}=\min\{a_i,b_i\}_{i=0}^{N_1}$. Now for all $k\in\mathbb{N}$, $b_{N_1+k}\leq b_{N_1+(k-1)}-3$ and $a_{N_1+k}\geq a_{N_1+(k-1)}+3$. If non of the above limits are true the the right side of the ladder is bounded from below and this leaves us with two options.
\begin{enumerate}
\item The maximum value changes infinitely many times between $a$ and $b$.
\item From some $i$, $b_i$ is the minimum of $a_i,b_i,a_{i+1},b_{i+1}$, and thus for each $j>i$ $b_{j}\leq b_{j+1}-3$.
\end{enumerate}
In any case The density of positive integers in the right part of the ladder is less than $2/3$.

If without loss of generality $\limsup_{i\rightarrow-\infty}a_i=\infty$, there exists some $N_2\in-\mathbb{N}$ such that $a_{N_2}=\max\{a_i,b_i\}_{i=-1}^{N_1}$ and $b_{N_2}=\min\{a_i,b_i\}_{i=-1}^{N_1}$. As before for all $k\in\mathbb{N}$, $b_{N_1-k}\leq b_{N_1-(k-1)}-3$ and $a_{N_1-k}\geq a_{N_1-(k-1)}+3$. Thus the density of negative integers is no more than $2/3$. This contradicts our assumption that $\phi$ is surjective.
\end{proof}

\begin{proof}[Proof of Theorem \ref{thm:finite}]
By the harmonic property, determining $\phi$ on two adjacent slices of $G$ determines the function on all $G\times\Z$. For a graph $G$ with $n$ vertices define the Laplacian matrix $L=(l_{i,j})_{n\times n}$
\bae
l_{i,j}=\left\{\begin{array}{ll}
{\rm deg}(v_i)&\quad{\rm if}~i=j\\
-1&\quad {\rm if}~i\sim j\\
0&\quad {\rm otherwise}
\end{array}\right.
.\eae
$L$ is a real symmetric matrix, and hence has a real orthogonal diagonalization.
Let $\mu_0\leq\mu_1\leq\ldots\leq\mu_{n-1}$ be the eigenvalues of $L$. It is known that $\forall i,\mu_i\geq0$, $\mu_0=0$ (the vector $(1,1,\ldots,1)$ is an eigenvector with eigenvalue $0$). The multiplicity of the eigenvalue $0$ is the number of connected components in $G$, and then for every connected component there is the corresponding $0$-eigenvector which is the vector all of whose entries are $0$, accept for those which stand for vertices in that connected component - and their value is $1$.

Denote by $\overline{y}=\{y_i\}_{i=1}^n$ the values of $\phi$ on some slice of $G$ and $\overline{x}=\{x_i\}_{i=1}^n$ the values on the slice above it.
\begin{lem}
Let $\overline{a},\overline{b},\overline{c}$ be the values of $\phi$ on three slices of $G$ one on top of the other.
Then
\bae
\left(\begin{array}{ll}
\overline{c}\\
\overline{b}
\end{array}\right)=\left(\begin{array}{ll}
L+2I&\quad -I\\
I&\quad 0
\end{array}\right)\left(\begin{array}{ll}
\overline{b}\\
\overline{a}
\end{array}\right)
\eae
\end{lem}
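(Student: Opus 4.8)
The plan is to write out the harmonic equation at a single vertex of the middle slice and to recognize the graph Laplacian of $G$ inside it. Fix a vertex $v_i\in V(G)$ and consider its three consecutive copies in $G\times\Z$ carrying the values $a_i$, $b_i$, $c_i$ (bottom, middle, top). In the product graph the neighbors of the middle copy are exactly the copies $v_j$ (in the same slice) with $v_j\sim v_i$ in $G$, together with the copy directly below and the copy directly above; hence the degree of that vertex in $G\times\Z$ is $\deg_G(v_i)+2$.

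Applying the harmonic property at this vertex and clearing the denominator gives
\bae
(\deg_G(v_i)+2)\,b_i=\sum_{j:\,v_j\sim v_i}b_j+a_i+c_i,
\eae
so that $c_i=(\deg_G(v_i)+2)b_i-\sum_{j:\,v_j\sim v_i}b_j-a_i$. Writing $D$ for the diagonal degree matrix of $G$ and $A$ for its adjacency matrix, one has $L=D-A$, hence $(L\overline{b})_i=\deg_G(v_i)b_i-\sum_{j:\,v_j\sim v_i}b_j$ and therefore $((L+2I)\overline{b})_i=(\deg_G(v_i)+2)b_i-\sum_{j:\,v_j\sim v_i}b_j$. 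Substituting this into the expression for $c_i$ yields $c_i=((L+2I)\overline{b})_i-a_i$ for every $i$, i.e. $\overline{c}=(L+2I)\overline{b}-\overline{a}$. Combining this with the trivial identity $\overline{b}=I\overline{b}+0\cdot\overline{a}$ gives exactly the asserted block-matrix relation.

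There is essentially no obstacle in this argument; the only points needing care are the bookkeeping of the vertex degree in the product $G\times\Z$ (it increases by $2$, not by the degree of $\Z$ at a generic vertex only superficially) and the sign convention $L=D-A$. The value of the lemma is structural: it reduces the evolution of $\phi$ from one pair of slices to the next to iteration of a single fixed $2n\times 2n$ transfer matrix, and the spectral analysis of that matrix is what will subsequently drive the proof of Theorem \ref{thm:finite}.
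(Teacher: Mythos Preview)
Your proof is correct and follows exactly the same approach as the paper, which simply declares the lemma ``immediate from the harmonic property'' and notes that the degree of each vertex increases by $2$ after taking the product with $\Z$. Your write-up just makes explicit the coordinate computation that the paper leaves to the reader.
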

\begin{proof}
Immediate from the harmonic property, note that we add $2I$ because the degree of each vertex in $G$ increases by $2$ after multiplying by $\Z$. See Figure \ref{fig:FinitegraphtimesZ} for clarification.
\begin{figure}
\begin{center}
\includegraphics[width=0.45\textwidth]{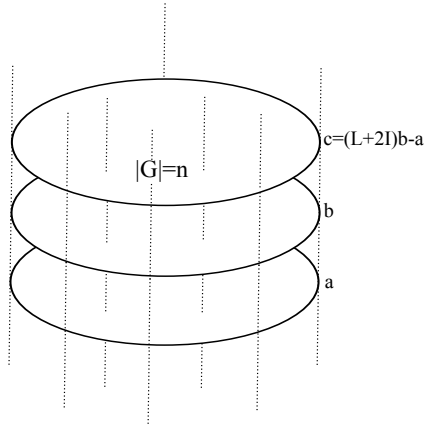}
\caption{Finite graph $G$ times $\Z$\label{fig:FinitegraphtimesZ}}
\end{center}
\end{figure}

\end{proof}
Next we analyze the eigenvalues of $A= \left(\begin{array}{ll}
L+2I&\quad -I\\
I&\quad 0
\end{array}\right)$. Assume $\lambda$ is an eigenvalue of $A$ with an eigenvector $\left(\begin{array}{ll}
\overline{x}\\
\overline{y}
\end{array}\right)$. Then

\bae\label{eq:eigencalctwotoone}
\left(A-\lambda I\right)\left(\begin{array}{ll}
\overline{x}\\
\overline{y}
\end{array}\right)=0&\Leftrightarrow
\left(\begin{array}{ll}
L+(2-\lambda)I&\quad -I\\
I&\quad -\lambda I
\end{array}\right)\left(\begin{array}{ll}
\overline{x}\\
\overline{y}
\end{array}\right)=0\\
&\Leftrightarrow
\left\{\begin{array}{ll}
\overline{x}=\lambda \overline{y}\\
L\overline{x}+(2-\lambda)\overline{x}=\overline{y}
\end{array}\right.\\
&\Rightarrow L\overline{x}=\left(\lambda+\frac{1}{\lambda}-2\right)\overline{x}.
\eae
Thus $\overline{x}$ is an eigenvector of $L$ with eigenvalue $\mu=\lambda+\frac{1}{\lambda}-2$. On the other direction, if $\mu$ is an eigenvalue of $L$ with eigenvector $\overline{a}$ then $\left(\begin{array}{ll}
\overline{a}\\
\lambda\overline{a}
\end{array}\right)$ and $\left(\begin{array}{ll}
\overline{a}\\
\frac{1}{\lambda}\overline{a}
\end{array}\right)$ are eigenvectors of $A$ with eigenvalues $\lambda$ and $\frac{1}{\lambda}$, that satisfy $\mu=\lambda+\frac{1}{\lambda}-2$. We know $\mu\geq0$ so $\lambda,\frac{1}{\lambda}=\frac{1}{2}\left((\mu+2)\pm\sqrt{(\mu+2)^2-4}\right)>0.$
\\ In particular, for $\mu\neq 1$, there are two different eigenvalues $\lambda, \frac{1}{\lambda} \neq 1$.
\\ For $\mu = 0$, $\lambda = 1$, the corresponding eigenvector is $\left(\begin{array}{ll}
\overline{v}\\
\overline{v}
\end{array}\right)$.
where $\overline{v}$ is the eigenvector of $L$ corresponding to the eigenvalue $0$ and the relevant connected component.
\\ It is easy to verify that for any such $\overline{v}$, we have that
\bae
A\left(\begin{array}{ll}
\overline{v}\\
\overline{0}
\end{array}\right)= \left(\begin{array}{ll}
2\overline{v}\\
\overline{v}
\end{array}\right) = \left(\begin{array}{ll}
\overline{v}\\
\overline{0}
\end{array}\right) + \left(\begin{array}{ll}
\overline{v}\\
\overline{v}
\end{array}\right)
\eae

Thus, we have found a Jordan decomposition for $A$. For any eigenvalue of $A$, different from $1$, we know its eigenvector, and for $\lambda = 1$ we have found the eigenvector and the extended eigenvector (there might be several, due to multiplicity). The above vectors, together, form a basis for the vector space, which is of dimension $2n$, where $n$ is the number of vertices of $G$.

Let $\{v_i\}$ be the eigenvectors and extended eigenvectors of $A$ with eigenvalues $\lambda_i$. Any vector of $2n$ coordinates can be represented as a linear combination of these vectors, and in particular any vector which represents the values of $\phi$ on two consecutive slices.

\begin{lem}\label{lem:nooneeigen}
The representation of vectors which represent the values of $\phi$ on two consecutive slices, using the base $\{v_i\}$, uses only eigenvectors or extended eigenvectors with eigenvalue $\lambda= 1$.
\end{lem}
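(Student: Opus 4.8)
The plan is to rephrase the lemma in terms of the slices of $\phi$. Write $\overline{x}_k$ for the vector of values of $\phi$ on the $k$-th copy of $G$, and decompose $\overline{x}_k=\overline{P}(k)+\overline{R}(k)$ with $\overline{P}(k)\in\ker L$ and $\overline{R}(k)\in(\ker L)^{\perp}$. Projecting the recurrence $\overline{x}_{k+1}=(L+2I)\overline{x}_k-\overline{x}_{k-1}$ onto $\ker L$ shows $\overline{P}$ is affine in $k$, so $\overline{P}(k)_v=p_{C}+q_{C}k$ for $v$ in a connected component $C$; on $(\ker L)^{\perp}$ the eigenvalue analysis above gives $\overline{R}(k)=\sum_{\mu_j>0}\bigl(\alpha_j\lambda_j^{\,k}+\beta_j\lambda_j^{-k}\bigr)\overline{u}_j$ with every $\lambda_j>1$. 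Since the $\lambda=1$ generalized eigenspace of $A$ is exactly the span of the vectors $(\overline{v},\overline{v})$ and $(\overline{v},\overline{0})$ over component indicators $\overline{v}$, and these correspond precisely to the affine solutions, the lemma is equivalent to the assertion $\overline{R}\equiv 0$. I would prove the contrapositive: if $\overline{R}\not\equiv0$, then $\phi$ is not a harmonic labeling.

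Two elementary consequences of $\phi$ being a bijection onto $\Z$ are used repeatedly. First, for every fixed $v$, $|\phi(v,k)|\to\infty$ as $|k|\to\infty$, since otherwise some value of $\phi(v,\cdot)$ is attained on two different slices. Second, because $\overline{P}(k)$ is constant on each component, any two vertices $v\neq v'$ in a common component $C$ satisfy $\phi(v,k)-\phi(v',k)=\overline{R}(k)_v-\overline{R}(k)_{v'}$, of absolute value at least $1$ for every $k$. Recall also that $G$ has an edge, so some component is nontrivial.

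Now suppose $\overline{R}\not\equiv0$. Grouping the terms of $\overline{R}(k)$ by the eigenvalue of $L$, the dominant exponential forces $\|\overline{R}(k)\|\to\infty$ as $k\to+\infty$ unless all $\alpha_j=0$, and as $k\to-\infty$ unless all $\beta_j=0$. If all $\alpha_j=0$ then $\overline{R}(k)\to\overline{0}$ as $k\to+\infty$, so for $v\neq v'$ in a nontrivial component $\phi(v,k)-\phi(v',k)\to0$, contradicting the second consequence; the case ``all $\beta_j=0$'' is symmetric (or follows by passing to $\phi(v,-k)$, again a harmonic labeling). Hence we may assume $\overline{R}$ grows exponentially in both directions. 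Call $v$ \emph{tame} if $\overline{R}(\cdot)_v\equiv0$ and \emph{wild} otherwise; a wild vertex exists since $\overline{R}\not\equiv0$. I claim: \textbf{(a)} every wild vertex has $|\phi(v,k)|\to\infty$ exponentially fast as $k\to\pm\infty$, so $\phi(v,\Z)$ meets $\{-M,\dots,M\}$ in $O(\log M)$ points; \textbf{(b)} for a tame vertex $v$ in component $C$ one has $q_C\in\Z$, $|q_C|\ge 2$, and $\phi(v,k)=p_C+q_Ck$, so $\phi(v,\Z)$ is a single residue class modulo $|q_C|$; \textbf{(c)} each component contains at most one tame vertex. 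Granting these, $\phi$ partitions $\Z$ into finitely many density-zero sets (the wild images) together with finitely many pairwise disjoint residue classes $\phi(t_C,\Z)$ of densities $1/|q_C|\le 1/2$ (the tame images, one per component carrying a tame vertex); surjectivity forces $\sum 1/|q_C|=1$, but disjoint residue classes whose densities sum to $1$ already exhaust $\Z$, leaving no room for the nonempty image of a wild vertex — a contradiction. Thus $\overline{R}\equiv0$.

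The step I expect to be the main obstacle is \textbf{(a)} (and \textbf{(b)} is proved the same way). The key is an integrality argument: if $\overline{R}(\cdot)_v$ failed to blow up as $k\to+\infty$ then, being an exponential polynomial with bases $\lambda(\mu)>1$ and $\lambda(\mu)^{-1}<1$, all its growing coefficients at $v$ would vanish, so $\phi(v,k)=p_C+q_Ck+\overline{R}(k)_v$ with $\overline{R}(k)_v\to0$; integrality of $\phi(v,k)$ then forces $q_C,p_C\in\Z$, hence $\overline{R}(k)_v\in\Z$ for all $k$, hence $\overline{R}(k)_v=0$ for all large $k$; an exponential polynomial with infinitely many integer zeros vanishes identically, so $v$ would be tame. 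Running the same argument as $k\to-\infty$ gives \textbf{(a)}; \textbf{(b)} uses the same reasoning plus the observation that $|q_C|=1$ would make $\phi(v,\Z)=\Z$ and exhaust the range of $\phi$; and \textbf{(c)} is immediate from the second consequence, since two tame vertices in one component would have $\phi(v,k)=\phi(v',k)$. The remaining ingredients — affine-linearity of $\overline{P}$, the dominant-exponential estimates, and the elementary fact that disjoint arithmetic progressions of total density $1$ cover $\Z$ — are routine.
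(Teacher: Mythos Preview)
Your argument is correct and follows the same route as the paper: split each slice into its $\ker L$ part (affine in $k$) and its $(\ker L)^{\perp}$ part (sums of $\lambda^{\pm k}$ with $\lambda>1$), use integrality of $\phi$ to force each coordinate to be either purely arithmetic or exponentially growing in both directions, and then invoke the covering fact that disjoint arithmetic progressions of total density $1$ already exhaust $\Z$, leaving no room for the exponential lines. Your tame/wild bookkeeping, the projection onto $\ker L$, and the shortcut via $|\overline{R}(k)_v-\overline{R}(k)_{v'}|\ge 1$ on a nontrivial component are a cleaner packaging of what the paper does with the Jordan basis of $A$ and its slice-swapping symmetry, but the substantive steps --- the integrality argument that kills a one-sided decaying exponential and the arithmetic-progression density lemma --- are the same.
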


Assuming Lemma \ref{lem:nooneeigen} we can finish the proof of the theorem. Indeed, the basis elements which may be used for the representation of such vectors, must be constant on every set of coordinates which corresponds to a connected component of $G$ (note that for any such component, there are two sets of coordinates, among the $2n$ coordinates, that correspond to that component - as there are two slices). This is due to the fact that both the eigenvectors of $1$ (in $A$) and the extended (Jordan) eigenvectors are constant on such sets. Thus, any combination of such vectors will be constant on connected components (in any slice separately), but since there is at least one edge in $G$, there is at least one such component with at least two vertices. $\phi$ must receive the same value for both vertices - in contradiction with $\phi$ being bijective.

We now continue to the proof of the lemma.
\begin{proof}[Proof of Lemma \ref{lem:nooneeigen}]
It is enough to prove there are no eigenvectors for eigenvalues larger then $1$ in the representation of such a vector $\left(\begin{array}{ll}
\overline{x}\\
\overline{y}
\end{array}\right)$
as inverting the order of the slices takes an eigenvalue $\lambda$ to $\frac{1}{\lambda}$ and vice versa and leaves us with another harmonic labeling. To see this remember equation \eqref{eq:eigencalctwotoone} and calculate
\bae
A\left(\begin{array}{ll}
\overline{y}\\
\overline{x}
\end{array}\right)&=
\left(\begin{array}{ll}
L+(2-\lambda)I&\quad -I\\
I&\quad -\lambda I
\end{array}\right)\left(\begin{array}{ll}
\overline{y}\\
\overline{x}
\end{array}\right)\\
&=\left(\begin{array}{ll}
L\overline{y}+2\overline{y}-\overline{x}\\
\overline{y}
\end{array}\right)\\
&=\left(\begin{array}{ll}
L\frac{1}{\lambda}\overline{x}+2\frac{1}{\lambda}\overline{x}-\frac{\overline{y}}{\lambda}+\frac{\overline{y}}{\lambda}-\lambda\overline{y}\\
\frac{1}{\lambda}\overline{x}
\end{array}\right)=
\frac{1}{\lambda}\left(\begin{array}{ll}
\overline{y}\\
\overline{x}
\end{array}\right)
\eae
We are left with showing that if $\overline{u}=\left(\begin{array}{ll}
\overline{x}\\
\overline{y}
\end{array}\right)$ represents the values of $\phi$ in two consecutive slices, then in it's linear combination with respect to the basis $\overline{v_i}$, no eigenvector for eigenvalue greater than $1$ appears.
We define a $\emph{line}$ to be a set of vertices in $G\times\mathbb{Z}$ with the same $G$-coordinate. We have $n$ lines.
\begin{definition}
Let $S = \{s_n\}_{n=-\infty}^{n=\infty}$ be a monotone increasing sequence of (different) integers.
Denote by $[n]$ the set $\{-n,...,n-1,n\}$. If \[\lim_{m\rightarrow\infty}\left|[m]\bigcap S\right|/(2m+1)\] exists, it is called the $\emph{density}$ of $S$.
\end{definition}
Some well known (and easy to establish) properties of the density are the following:
\begin{enumerate}
\item An arithmetic sequence with more than one element has positive density.
\item A sequence which grows in absolute value exponentially (in both directions) has density $0$.
\item $\mathbb{Z}$ has density $1$.
\item The density of the union of two sequences (reordered to be monotonic) is no more than the sum of densities of the two sequences.
\end{enumerate}

Assume that
\bae
\overline{u} = \sum_{i=1}^{2n}a_i \overline{v_i}
\eae
Then in particular, for the $j^{th}$ coordinate $u^j = \sum_{i=1}^{2n}a_i {v_i}^j$.
We know that the values of $\phi$ in the slices "higher" then those represented by $\overline{u}$ can be calculate by repeated iterations of the matrix $A$. Thus, if the last ("lower") $n$ coordinates of $\overline{u}$ represent slice number $0$, and the first $n$ coordinates represent slice number $1$, then $\phi$ takes in slice number $k$ the values which appear in the first $n$ coordinates of $\overline{u_k} := A^k \overline{u}$ (or equivalently - those in the last $n$ coordinates of
$A^{k+1} \overline{u}$). Thus, for any eigenvector $\overline{v_i}$ for eigenvalue $\lambda_i \neq 1$, its coefficient in the representation of $\overline{u_k}$ is ${\lambda_i}^{k} a_i$.

Let $\lambda_i > 1$ be a maximal eigenvalue for which $a_i \neq 0$. There must be at least one such $\lambda_i$, due to the assumption of the lemma, yet - there might be several equal eigenvalues. Denote by $t^{j}$ the sum $\sum_{\{l | \lambda_l = \lambda_i\}}a_l {v_l}^{j}$, where $v_l^j$ denotes the $j$-th coordinate of $v_l$. This may be considered as the sum of contributions of the eigenvectors with eigenvalue $\lambda_i$ to $\overline{u}$ in the $j^{th}$ coordinate. As the eigenvectors are linearly independent, and $a_i \neq 0$, not all the $t^{j}$'s can be $0$. Without loss of generality $t^{1} \neq 0$.

Note that there must be some other eigenvalue $\lambda_b < 1$ such that the sum of contributions of eigenvectors with that eigenvalue $\lambda_i$ to $\overline{u}$ in the $1^{st}$ coordinate $r^1:= \sum_{\{l | \lambda_l = \lambda_b\}}a_l {v_l}^{1}$ is non-zero as well. The reason is that if we consider lower slices (applying $A^{-k}$) then the contributions of eigenvectors with eigenvalues smaller than $1$ decay exponentially and the contributions of eigenvectors and extended eigenvectors of $1$ to the first coordinate are summed to be an arithmetic progression (that need not to be, at least naively, an arithmetic progression of integers). If there are no other elements added, then the $1^{st}$ coordinate in "negative" slices are only the sum of such sequences. But it is very easy to verify that such a sum cannot be always integer (as it must be) if the exponential decaying sequences do not add to $0$. And in our case they cannot.

Thus, for a large enough positive integer $k~$ ${u_k}^{1}\sim t^{1} {\lambda_i}^k$, and for small enough negative integer $k~$ ${u_k}^{1}\sim r^{1} {\lambda_b}^k$ (assuming that $\lambda_b$ is the smallest eigenvalue with nonnegative contribution of its eigenvectors to the $1^{st}$ coordinate). Hence, from property 2 of densities, we have that the values that the first coordinate receives along its line is a set of density $0$ in $\mathbb{Z}$.
This analysis can be carried to any coordinate with nonzero contribution related to some eigenvalue other than $1$.

Assume that there are $k$ coordinates without any contribution from eigenvalue
$\lambda \neq 1$, and the rest $n-k$ coordinates are contributed from such eigenvalues. From property 4 of the density it follows that the density of the union of these $n-k$ lines is $0$. On the other hand, in the other $k$ lines, the coordinates (values of $\phi$) are non constant arithmetic progressions (due to the Jordan extended eigenvectors). Thus - they have a density (property 1).
The sets of values of $\phi$ in each line are pairwise disjoint ($\phi$ is bijective) and their union cover $\mathbb{Z}$ ($\phi$ is surjective). Thus, due to properties 4,3 the density of the union of the $k$ arithmetic progressions is $1$. We shall now prove the following lemma, and use it to finish the argument.
\begin{lem}
If $S^1 ,...S^n$ are arithmetic sequences s.t. the density of their union is $1$ then they cover all $\mathbb{Z}$.
\end{lem}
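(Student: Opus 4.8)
The plan is to reduce the statement to an elementary counting fact: the union of finitely many arithmetic progressions is periodic, and for a periodic set ``density $1$'' already means ``everything''. Write each arithmetic sequence as $S^i=\{a_i+d_ik:k\in\Z\}$ with $a_i\in\Z$ and $d_i$ a positive integer (the common difference of a strictly monotone integer arithmetic sequence is a positive integer). Since $n$ is finite, $D:=\mathrm{lcm}(d_1,\dots,d_n)$ is well defined, and as $d_i\mid D$ each $S^i$ is a union of residue classes modulo $D$. Hence the union $S:=\bigcup_{i=1}^{n}S^i$ is itself a union of residue classes modulo $D$; equivalently $S+D=S$, so $S$ is periodic with period $D$.

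Next I would compute the density of $S$ directly from the definition. Let $c$ be the number of $r\in\{0,1,\dots,D-1\}$ whose residue class modulo $D$ is contained in $S$, so that $S$ meets exactly $c$ out of any $D$ consecutive integers. Partitioning $[m]=\{-m,\dots,m\}$ (which has $2m+1$ elements) into blocks of $D$ consecutive integers then gives $\bigl|\,|[m]\cap S|-\tfrac{c}{D}(2m+1)\,\bigr|\le D$ for every $m$; dividing by $2m+1$ and letting $m\to\infty$ shows that the density of $S$ exists and equals $c/D$.

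By hypothesis the density of $S$ is $1$, hence $c/D=1$, i.e.\ $c=D$. Thus every residue class modulo $D$ lies in $S$, which means $S=\Z$; that is, $S^1,\dots,S^n$ cover all of $\Z$, as claimed. I do not expect a genuine obstacle here: the only points needing routine care are that the common differences are honest positive integers — so that $\mathrm{lcm}$ makes sense and ``union of residue classes'' is literally true — and the $O(D)$ error estimate above. Once the union is recognized as periodic, ``density $1$'' and ``equals $\Z$'' are the same statement, and feeding $S=\Z$ back into the argument of Lemma~\ref{lem:nooneeigen} contradicts the existence of the $n-k\ge 1$ lines of density $0$, finishing the proof of the theorem.
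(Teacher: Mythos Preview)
Your proof is correct and follows essentially the same approach as the paper: take $D=\mathrm{lcm}(d_1,\dots,d_n)$, observe that $S$ is $D$-periodic (a union of residue classes mod $D$), and conclude that density $1$ forces every residue class to be hit. The paper phrases the final step as a contrapositive (a missed residue class $k$ gives density $\le 1-1/D$) while you compute the density as $c/D$ directly, but this is the same argument.
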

\begin{proof}
Assume that $S^i$ has the form $\{a_i+md_i\}_m$. Let $D$ denote the l.c.m. of $\{d_i\}$.
Consider the intersection of $\{1,2,...,D\}$ with $S$, the union of the sequences.
If this intersection is all of $\{1,2,...,D\}$, then $S$ contains all of $\mathbb{Z}$. Indeed, if $k\in S$ then $k+mD$ is in $S$ as well, for any integer $m$, as $k$ must belong to some $S^i$, hence $k+ld_i$ must be in $S$ for any $l$, but since $D$ is a multiple of $d_i$, $k+mD$ is in $S$ as well, for any $m$. Thus $S\supseteq \mathbb{Z}$, and hence $S = \mathbb{Z}$.

On the other hand, if there is some $1 \leq k \leq D$ not in $S$, a similar argument shows that $k+mD$ is never in $S$, for any $m$ (as the analysis above shows that if $k+mD$ is in $S$, so is $(k+mD)-mD$). But from here it is easy to verify that the density of $S$ is at most $1-\frac{1}{D}$. A contradiction.
\end{proof}
We can now finish the proof of Lemma \ref{lem:nooneeigen}. It follows from the lemma that the values of $\phi$ in the relevant $k$ lines (those without exponential growth), which are arithmetic progressions with total density (density of the union) being $1$, must cover $\mathbb{Z}$ by themselves. But then since the rest of the $n-k$ lines also have values in $\mathbb{Z}$, some values must repeat, in contradiction with $\phi$ being bijective.
And the lemma is proven.
\end{proof}
And the theorem is thus proved.
\end{proof}

\begin{rmk}
It is easy to see that if $G$ is just a set of points, without any edges, then there exists a harmonic labeling - $G\times\mathbb{Z}$ is a collection of $d$ copies of $\mathbb{Z}$, name them $0,1,2,...,d-1$ and then $\phi (a,z) := a+dz$, where $a \in \{0,1,2,...,d-1\}$, $z \in \mathbb{Z}$ represent the $z^{th}$ point in the $a^{th}$ copy of $\mathbb{Z}$.
\end{rmk}

\begin{rmk}Note that $\Z^2\times\Z_2$ has a harmonic labeling and a regular tree times $\Z_2$ also has a harmonic labeling (easy generalization of theorems \ref{thm:everytree} and \ref{thm:ztwolabeling}) . If one finds a finite graph $G$ such that $\Z^2\times G$ doesn't have a harmonic labeling it also provides an example to an infinite graph $G'=\Z\times G$ such that $G'\times\Z$ doesn't have a harmonic labeling.
\end{rmk}

\begin{rmk}\label{rmk:injecladder}
From the proof of Theorem \ref{thm:finite}, one can derive a construction for an injective harmonic function from the ladder graph to $\Z$. Calculating the Eigenvalues and Eigenvectors of the ladder's matrix $A$, we find there is an Eigenvector with an Eigenvector greater than $1$ and another with an Eigenvalue smaller than $1$. These Eigenvectors grow exponentially in one direction of the ladder and decreases exponentially in the other and in opposite directions. There exists a linear combination of the two Eigenvectors that assures injectivity.  
\end{rmk}

\section{Labeling of $\mathbb{Z}^d$}
\begin{thm}\label{thm:ztwolabeling}
There is a harmonic labeling of $\mathbb{Z}^2$.
\end{thm}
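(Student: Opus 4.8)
\section*{Proof proposal for Theorem \ref{thm:ztwolabeling}}

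The plan is to realise $\phi$ as an increasing union $\phi=\bigcup_N\phi_N$ of partial, injective, ``harmonic‑so‑far'' functions, using the rigidity of harmonic functions on $\Z^2$: once the values on two adjacent rows, say $a_n:=\phi(n,0)$ and $b_n:=\phi(n,1)$, are prescribed, the propagation
\[
\phi(n,k+1)=4\phi(n,k)-\phi(n-1,k)-\phi(n+1,k)-\phi(n,k-1)
\]
(together with its downward analogue) determines $\phi$ everywhere, and the result is automatically $\Z$‑valued since the recursion has integer coefficients. So the entire freedom in a harmonic $\phi:\Z^2\to\Z$ is the choice of the two integer sequences $(a_n),(b_n)$, and I would build them from the centre outward: at stage $N$ the values $a_n,b_n$ are fixed for $|n|\le N$, this determines $\phi$ on the region $T_N$ reached by the propagation (a ``double triangle'' with $\bigcup_N T_N=\Z^2$), and to pass to stage $N+1$ I choose the four new, \emph{unconstrained}, values $a_{\pm(N+1)},b_{\pm(N+1)}$; everything else in $T_{N+1}\setminus T_N$ — about $4N$ vertices lying on four ``rays'' issuing from the corners of $T_N$ — is then forced. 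Of the four free values, one is set equal to the integer of least absolute value not yet in the range of $\phi$ (which yields surjectivity in the limit, exactly as in the tree constructions), and the other three are chosen enormous relative to everything assigned at earlier stages and with pairwise very different magnitudes.

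The subtlety, which is also the crux, is injectivity of the forced values, and here $\Z^2$ genuinely differs from the trees of Section 2 because it contains cycles. A direct computation shows that the forced value at the height‑$k$ vertex of a given ray equals $\pm(\text{one of the four new free values})+c_k$, where the ``constant'' $c_k$ is a fixed linear combination of the $a_i,b_i$ chosen at \emph{earlier} stages; hence two forced values on the same ray with the same parity of $k$ differ by $c_k-c_{k'}$, a quantity that is already determined before stage $N+1$ and cannot be altered by the current choices, and forced values on different rays are tied together as well. With the free values taken huge and of distinct magnitudes, all potential collisions reduce to finitely many conditions of the shape $c_k\ne c_{k'}$ and $c_k\ne 0$, which must be guaranteed by the earlier choices. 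I would handle this by carrying a strengthened inductive hypothesis: a finite, explicitly described list of non‑degeneracy conditions on the data $\{a_i,b_i:|i|\le N\}$, tailored so that at stage $N+1$ the $\approx 4N$ forced values are pairwise distinct and disjoint from $\phi(T_N)$. The point making this feasible is that each such condition depends nontrivially on a free value introduced at a suitable earlier stage — for instance one checks that the coefficient of $b_N$ in $c_k-c_{k'}$ is a nonzero integer — so it can be arranged when that value is chosen; since only finitely many conditions ``come due'' by any given stage, a priority/diagonalisation bookkeeping satisfies them all, and any coincidence that a symmetry of $\Z^2$ might otherwise force is avoided by the freedom to let the four rays grow at different rates.

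With the bookkeeping in place the rest is routine: the $\phi_N$ agree on overlaps because no value is ever revised, so $\phi:=\bigcup_N\phi_N:\Z^2\to\Z$ is well defined; it is harmonic at every vertex since every vertex is interior to some $T_N$, where the propagation step enforced the averaging identity there; it is injective because every $\phi_N$ is and no extension step creates a collision; and it is surjective because the integer of least absolute value missing at stage $N$ is placed at stage $N$, so every integer is attained at a finite stage. I expect the genuinely laborious part to be the middle paragraph — pinning down the list of non‑degeneracy conditions so that it is simultaneously rich enough to propagate the induction and satisfiable at each stage with only the four available free parameters, and verifying that every such condition is sensitive to an earlier free parameter so the priority argument never stalls.
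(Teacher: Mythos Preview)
Your outline is essentially the paper's approach: span by two adjacent rows, build outward from the centre inductively, reserve some of the four new boundary values at each stage for surjectivity and make the others enormous so that injectivity on the four diagonal edges of the new shell follows from magnitude considerations. The paper executes this with two small ``filling'' values on row $-1$ and two huge values on row $0$ at each step, and replaces your abstract non-degeneracy/priority bookkeeping with explicit tower-of-exponential growth rates and direct error estimates along each edge (its properties 2--5), so that every non-collision is read off from a magnitude comparison rather than from a separately maintained list of conditions --- but the underlying mechanism (your observation that the forced value at height $k$ is $\pm(\text{new free value})+c_k$ with $c_k$ dominated by the previous stage's huge value) is exactly what drives their estimates.
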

Before presenting the rigorous proof we will present a sketch of the proof. The labeling $\phi$ is spanned by its values on $\Z\times\{0\}\cup\Z\times\{-1\}$. The construction is simply placing a rapidly increasing sequence on one copy of $\Z$ and a filling sequence on the other i.e. a sequence that takes care of the surjectivity of $\phi$, grows slowly and jumps only over previously introduced values of $\phi$. It is left to show that $\phi$ is injective. We get this result with tedious calculations, but the idea is simple. For every large element of the sequence there are only a linear number of other vertices with the same order of magnitude and they all have different coefficients in front of the smaller magnitudes.
\begin{rmk}\label{rmk:span}
Notice that by Theorem \ref{thm:ztwolabeling} $\Z\times\Z$ admits harmonic labeling where as $\Z\times[-k,k]$ does not admit harmonic labeling for all $k$ (by Theorem \ref{thm:finite}). The essential difference between the two graphs is that a harmonic function on $\Z\times[-k,k]$ is spanned by a finite set and a harmonic function on $\Z^2$ is spanned by an infinite set, leaving the function with more degrees of freedom that allow it to be bijective. See Section \ref{sec:openprob} (Question \ref{item:spanningquestion}) for an open question inspired by this remark.
\end{rmk}
\begin{proof}[Proof of Theorem \ref{thm:ztwolabeling}]
We will construct a harmonic labeling of $\mathbb{Z}^2$ inductively.  To do this we need some notation.

Let $A(x):=(x,-1)\in\mathbb{Z}^2$ and $B(x):=(x,0)\in\mathbb{Z}^2$.  Define
\begin{eqnarray*}
A&:=&\bigcup_{x\in\mathbb{Z}}A(x) \\
B&:=&\bigcup_{x\in\mathbb{Z}}B(x).
\end{eqnarray*}
For $n\in\mathbb{N}$ and $0\leq i\leq n$, set \\ \\
\centerline{\begin{tabular}{ll}
$UL_n^i:=(-n+i,i)$&``Upper-left'' \\
$UR_n^i:=(n+1-i,i)$ & ``Upper-right''  \\
$LL_n^i:=(-n+i,-i-1)$ & ``Lower-left''  \\
$LR_n^i:=(n-i+1,-i-1)$ & ``Lower-right.''
\end{tabular}} \\
Set
\begin{eqnarray*}
UL(n)&:=&\bigcup_{0\leq i\leq n}UL_n^i \\
UR(n)&:=&\bigcup_{0\leq i\leq n}UR_n^i \\
LL(n)&:=&\bigcup_{0\leq i\leq n}LL_n^i \\
LR(n)&:=&\bigcup_{0\leq i\leq n}LR_n^i \\
\end{eqnarray*}
and finally let
$$
S_n:=\bigg(\bigcup_{0\leq k\leq n}UL(k)\bigg)\cup\bigg(\bigcup_{0\leq k\leq n}UR(k)\bigg)\cup\bigg(\bigcup_{0\leq k\leq n}LL(k)\bigg)\cup\bigg(\bigcup_{0\leq k\leq n}LR(k)\bigg).\footnote{The above terminology for $UL(n), UR(n), LL(n)$ and $LR(n)$ as ``upper-left,'' ``upper-right,'' etc. refers to its location on the boundary of the set $S_n$.}
$$
Note that $S_{n+1}=S_n\cup UL(n+1)\cup UR(n+1)\cup LL(n+1)\cup LR(n+1)$.

Observe that, for any harmonic function $f:=\mathbb{Z}^2\to\mathbb{R}$, the value of $f$ at the point $(x,y)$ is an integer combination of the numbers $f(x,y-1), f(x-1,y-1), f(x+1,y-1)$ and $f(x,y-2)$.  Inductively, an arbitrary function $\widetilde{f}:A\cup B\to\mathbb{Z}$ extends uniquely to a harmonic function $f:\mathbb{Z}^2\to\mathbb{Z}$.  Moreover, the values of $\widetilde{f}$ on the set $\{A(x):-n\leq x\leq n+1\}\cup\{B(x):-n\leq x\leq n+1\}$ determine the values of $f$ on the set $S_n$ (but does not determine the value of $f$ at any point in $\mathbb{Z}^2\setminus S_n$).

We will construct a sequence of functions
$$
\widetilde{f}_n:\{A(x):-n\leq x\leq n+1\}\cup\{B(x):-n\leq x\leq n+1\}\to\mathbb{Z}
$$
so that for each $n$, $\widetilde{f}_{n+1}$ extends the function $\widetilde{f}_n$ and will denote by $f_n:S_n\to\mathbb{Z}$ the (unique) function determined by $\widetilde{f}_n$ using the harmonic property.  We will do this in such a way that the common extension $f:\mathbb{Z}^2\to\mathbb{Z}$ of all of the functions $f_n$ is harmonic and bijective. \\

\noindent {\em Step 1 (Base of Induction):}
Begin by setting
\begin{eqnarray*}
\widetilde{f}_1(UL_0^0)&:=&\phantom{-}10! \\
\widetilde{f}_1(UL_1^0)&:=&\phantom{-}10!! \\
\widetilde{f}_1(UR_0^0)&:=&\phantom{-}10!!! \\
\widetilde{f}_1(UR_1^0)&:=&\phantom{-}10!!!! \\
\widetilde{f}_1(LL_0^0)&:=&\phantom{-}0 \\
\widetilde{f}_1(LL_1^0)&:=&-1 \\
\widetilde{f}_1(LR_0^0)&:=&\phantom{-}1 \\
\widetilde{f}_1(LR_1^0)&:=&\phantom{-}2.
\end{eqnarray*}

By the harmonic property,
\begin{eqnarray*}
f_1(UL_1^1)&=&\phantom{-}4\cdot10!-10!!-10!!! \\
f_1(UR_1^1)&=&\phantom{-}4\cdot10!!!-10!-10!!!!-1 \\
f_1(LL_1^1)&=&-10! \\
f_1(LR_1^1)&=&\phantom{-}4-10!!!-2.
\end{eqnarray*}

\noindent {\em Step 2 (Induction Step):}
Suppose we have constructed the function
$$
\widetilde{f}_n:\{A(x):-n\leq x\leq n+1\}\cup\{B(x):-n\leq x\leq n+1\}\to\mathbb{Z}
$$
so that its harmonic extension $f_n:S_n\to\mathbb{Z}$ satisfies (note the similarity of conditions 2-5),
\begin{enumerate}
\item The range of $f_n$ includes each integer in the set $[-n,n+1]$;
\item For $0<i<n$, we can estimate $f_n(UL_n^i)$ by
$$
f_n(UL_n^i)=(-1)^i\big(f_n(UL_n^0)-4i\cdot f_n(UL_{n-1}^0)\big)\pm\frac{i}{i+1}f_n(UL_{n-1}^0)\footnote{By $a=b\pm c$, we mean $b-c\leq a\leq b+c$}
$$
and
$$
f_n(UL_n^n)=(-1)^n\big(f_n(UL_n^0)-4n\cdot f_n(UL_{n-1}^0)+f_n(UR_n^0)\big)\pm\frac{n}{n+1}f_n(UL_{n-1}^0);
$$
\item For $0<i<n$,
$$
f_n(UR_n^i)=(-1)^i\big(f_n(UR_n^0)-4i\cdot f_n(UR_{n-1}^0)\big)\pm\frac{i}{i+1}f_n(UR_{n-1}^0)
$$
and
$$
f_n(UR_n^n)=(-1)^n\big(f_n(UR_n^0)-4n\cdot f_n(UR_{n-1}^0)+f_n(UL_n^0))\big)\pm\frac{n}{n+1}f_n(UR_{n-1}^0);
$$
\item For $1<i<n$,
$$
f_n(LL_n^i)=(-1)^{i+1}\big(f_n(UL_{n-1}^0)-4(i-1)\cdot f_n(UL_{n-2}^0)\big)\pm\frac{i}{i+1}f_n(UL_{n-2}^0)
$$
and for $n>1$,
$$
f_n(LL_n^n)=(-1)^n\big(f_n(UL_{n-1}^0)-4n\cdot f_n(UL_{n-2}^0)+f_n(UR_{n-1}^0))\big)\pm\frac{n}{n+1}f_n(UL_{n-2}^0);
$$
\item For $1<i<n$,
$$
f_n(LR_n^i)=(-1)^{i+1}\big(f_n(UR_{n-1}^0)-4(i-1)\cdot f_n(UR_{n-2}^0)\big)\pm\frac{i}{i+1}f_n(UR_{n-2}^0)
$$
and for $n>1$,
$$
f_n(LR_n^n)=(-1)^n\big(f_n(UR_{n-1}^0)-4n\cdot f_n(UR_{n-2}^0)+f_n(UL_{n-1}^0))\big)\pm\frac{n}{n+1}f_n(UL_{n-2}^0);
$$
\item $f_n$ is injective.
\end{enumerate}
We will construct $\widetilde{f}_{n+1}$ (extending $\widetilde{f}_n$) so that its harmonic extension $f_{n+1}$ also satisfies the above properties.  We proceed in four stages, analyzing $f_{n+1}$ on the upper-left, upper-right, lower-left and lower-right edges of its domain, $S_{n+1}$.

Let $N_1$ be the largest negative integer not attained by $f_n$ and $N_2$ be the smallest positive integer not attained by $f_n$.  Let $N_3$ be the sum of the absolute values of all integers in the range of $f_n$.  We define
\begin{eqnarray*}
\widetilde{f}_{n+1}(LL_{n+1}^0)&:=&N_1 \\
\widetilde{f}_{n+1}(LR_{n+1}^0)&:=&N_2 \\
\widetilde{f}_{n+1}(UL_{n+1}^0)&:=&10^{N_2-N_1+N_3} \\
\widetilde{f}_{n+1}(UR_{n+1}^0)&:=&10^{10^{N_2-N_1+N_3}}
\end{eqnarray*}
and $\widetilde{f}_{n+1}=\widetilde{f}_n$ for all points where $\widetilde{f}_n$ is defined.  Therefore $f_{n+1}=f_n$ on $S_n$.  By construction (of $\widetilde{f}_{n+1}$), property one is satisfied for $f_{n+1}$. \\

\noindent {\em Step 2.1 (Upper-left edge):}
We show that property $2$ is satisfied by induction (on $i$).  Note that $f_{n+1}$ satisfies property two at $UL_{n+1}^1$ because
$$
f_{n+1}(UL_{n+1}^1)=4f_{n+1}(UL_n^1)-f_{n+1}(UL_{n+1}^0)-f_{n+1}(UL_{n-1}^0)-f_{n+1}(LL_{n-1}^0).
$$
Now suppose $1\leq i<n$ and $f_{n+1}$ satisfies property two at $UL_{n+1}^j$ for each $0\leq j\leq i$.  Using the harmonic property,
\begin{equation}\label{estimate1}
f_{n+1}(UL_{n+1}^{i+1})=4f_{n+1}(UL_n^i)-f_{n+1}(UL_{n+1}^i)-f_{n+1}(UL_{n-1}^i)-f_{n+1}(UL_{n-1}^{i-1}).
\end{equation}
%
%

By property two and the induction hypothesis,
\begin{eqnarray*}
f_{n+1}(UL_n^i)&=&(-1)^i\big(f_{n+1}(UL_n^0)-4i\cdot f_{n+1}(UL_{n-1}^0)\big)\pm\frac{i}{i+1}f_{n+1}(UL_{n-1}^0) \\
f_{n+1}(UL_{n+1}^i)&=&(-1)^i\big(f_{n+1}(UL_{n+1}^0)-4i\cdot f_{n+1}(UL_n^0)\big)\pm\frac{i}{i+1}f_{n+1}(UL_n^0).
\end{eqnarray*}
By construction of $f_{n+1}(UL_{n-1}^0)$, $\frac{f_{n+1}(UL_{n-1}^0)}{f_{n+1}(UL_n^0)}<10^{-n}$.  Therefore (recalling that $i<n$),
\bae
&\left|-f_{n+1}(UL_{n-1}^i)-f_{n+1}(UL_{n-1}^{i-1})-(-1)^i4i\cdot f_{n+1}(UL_{n-1}^0)\pm\frac{i-1}{i}f_{n+1}(UL_{n-1}^0)\right|\\&<(4n+5)f_{n+1}(UL_{n-1}^0) \\
&<\frac{1}{n^2+1}f_{n+1}(UL_n^0).
\eae
Combining this with (\ref{estimate1}) shows that property two holds at $UL_{n+1}^{i+1}$.  By induction, property two holds for $0\leq i<n$.  A similar argument shows the case $i=n$.  \\


\noindent {\em Step 2.2 (Upper-right edge):} Property three follows by a similar argument to property two using $UR_{n+1}^i$ instead of $UL_{n+1}^i$. \\

\noindent {\em Step 2.3 (Lower-left edge):} Note that if $n>1$,
\begin{align}
f_{n+1}(LL_n^1)&=4f_{n+1}(LL_{n-1}^0)-f_{n+1}(LL_n^0)-f_{n+1}(LL_{n-2}^0)-f_{n+1}(UL_{n-1}^0) \\
&=-f_{n-1}(UL_{n-1}^0)\pm3n^2. \label{estimate2}
\end{align}
The rest of property four follows by a similar argument to property two. \\

\noindent {\em Step 2.4 (Lower-right edge):} Property five follows from a similar argument to property four. \\

\noindent {\em Step 2.5 (Injectivity):} Recall that $f_{n+1}$ is injective when restricted to $S_n$.  We need only see that it remains injective on its whole domain $S_{n+1}$.  Property two shows that $f_{n+1}$ is injective when restricted to $UL(n+1)$ (and properties 3-5 show that it is injective when restricted to $UR(n+1)$, $LL(n+1)$ and $LR(n+1)$, respectively).

If $(x,y)\in UL(n+1)$, then property two shows that
$$
\left|f_{n+1}(x,y)\right|>\left|f_{n+1}(UL_{n+1}^0)-4n\cdot f_{n+1}(UL_n^0)\right|>2\cdot\max_{(a,b)\in S_n}\left|f_{n+1}(a,b)\right|
$$
by construction of $f_{n+1}(UL_{n+1}^0)$ (which is $10^{\sum_{(a,b)\in S_n}\left|f_{n+1}(a,b)\right|}$).  Therefore $f_{n+1}$ is injective when restricted to $S_n\cup UL(n+1)$.  Similarly it is injective when restricted to $S_n\cup UR(n+1)$, $S_n\cup LL(n+1)$ and $S_n\cup LR(n+1)$.

We claim that $f_{n+1}$ is injective on all of $S_{n+1}$.  By properties 2-5, \\ \\
\centerline{\begin{tabular}{lll}
If $(x,y)\in UL(n+1)$ & then & $f_{n+1}(UL_{n+1}^0)=2^{\pm1}f_{n+1}(UL_{n+1}^0)$;\footnote{here $a=b^{\pm1}c$ means $\frac{c}{b}\leq a\leq bc$.} \\
If $(x,y)\in UR(n+1)$ & then & $f_{n+1}(UR_{n+1}^0)=2^{\pm1}f_{n+1}(UR_{n+1}^0)$; \\
If $(x,y)\in LL(n+1)$ & then & $f_{n+1}(LL_{n+1}^0)=2^{\pm1}f_{n+1}(UL_n^0)$; \\
If $(x,y)\in LR(n+1)$ & then & $f_{n+1}(LR_{n+1}^0)=2^{\pm1}f_{n+1}(UR_n^0)$.
\end{tabular}} \\ \\
Recall that, by construction,
$$
f_{n+1}(LR_{n+1}^0)>10^{f_{n+1}(LL_{n+1}^0)}>10^{10^{f_{n+1}(UR_n^0)}}>10^{10^{10^{f_{n+1}(UL_n^0)}}}.
$$
Thus, $f_{n+1}$ is injective on $S_{n+1}$ (i.e., property six holds). \\

\noindent {\em Step 3 (Construction of labeling):} By induction, we can construct the functions $\widetilde{f}_n$ and $f_n$ for all $n$ so that they have the six properties above.  Since $f_{n+1}$ extends $f_n$ we can define $f(x,y)$ to be $f_n(x,y)$ for any $n$ large enough that $(x,y)\in S_n$.  By construction, $f_{n+1}$ is harmonic on $S_n$ so, by properties one and six, $f$ is a harmonic labeling.
\end{proof}
\begin{thm}
There is a harmonic labeling for $\Z^d$ for all $d\in\mathbb{N}$.
\begin{proof}[Proof sketch]
After reading the proof of theorem \ref{thm:ztwolabeling} one can see how to generalize the proof for $d>2$ but at the same time it seems that writing a rigorous proof is rather complicated and tedious. The labeling of $\Z^d$ is spanned by its values on $\Z^{d-1}\times\{0\}\cup\Z^{d-1}\times\{-1\}$. We construct $\phi$ on the boundary of $[-n,n]^{d-1}$. For $d=2$ we added the rapidly increasing sequence in two points each step (the boundary of $[-n,n]$), for $d=3$ we introduce the sequence to the boundary of a rectangle and so on. Lets take $d=3$ as an example of how to continue. By introducing the rapid sequence on a rectangle boundary of one slice and a filling sequence on another slice we can extend $\phi$ to a pyramid shape by using the harmonic property. Now Each order of magnitude dominates a face of the pyramid, where again the equations are different for each vertex. Proving the difference of equations is the technical part to rigorize, but it is not hard for one to convince himself the construction works for all dimensions. The trick is to take the increasing sequence blow fast enough such that the increase will be fast enough to beat the polynomial number of elements of the lower magnitudes.
\end{proof}
\end{thm}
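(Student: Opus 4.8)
The plan is to run the same inductive ``shell by shell'' construction used for $\Z^2$ in Theorem~\ref{thm:ztwolabeling}, now with $(d-1)$-dimensional slices; for $d=1,2$ the claim is the initial example and Theorem~\ref{thm:ztwolabeling}, so assume $d\geq 3$. Write $\Z^d=\Z^{d-1}\times\Z$ and call $\Z^{d-1}\times\{k\}$ the $k$-th slice. Solving the harmonic identity at $(v,k)$ for $f(v,k+1)$ (resp.\ $f(v,k-1)$) shows, exactly as in the planar case, that an arbitrary $\widetilde f\colon\Z^{d-1}\times\{0,-1\}\to\Z$ extends uniquely to a harmonic $f\colon\Z^d\to\Z$, and that the restriction of $\widetilde f$ to $B_n\times\{0,-1\}$, with $B_n:=[-n,n]^{d-1}\cap\Z^{d-1}$, determines $f$ on a ``bipyramid'' region $S_n$ (at height $k\geq 0$ it contains a copy of $B_{n-k}$, suitably shifted by $\pm 1$ as in the planar proof, and symmetrically at heights $k\leq -1$) and on no point outside $S_n$. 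As in Theorem~\ref{thm:ztwolabeling} one first pins down the precise combinatorial description of $S_n$ so that $S_n\subset S_{n+1}$ and $\bigcup_n S_n=\Z^d$; the data added at stage $n+1$ is $\widetilde f$ on the cube boundary $\partial B_{n+1}\times\{0,-1\}$, which is a set of $\Theta(n^{d-2})$ points.

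First I would set up the induction as in Steps~1--2 of the proof of Theorem~\ref{thm:ztwolabeling}. At stage $n+1$, enumerate the points of the top shell $\partial B_{n+1}\times\{0\}$ as $p_1,p_2,\dots$ and set $\widetilde f_{n+1}(p_j)$ to be a tower of exponentials whose height grows with $j$, each value exceeding a fixed polynomial multiple of the sum of absolute values of everything assigned so far; on the bottom shell $\partial B_{n+1}\times\{-1\}$ assign ``filling'' values, the integers of smallest absolute value not yet attained, keeping these small while guaranteeing surjectivity of the limit (the filling values lie literally in the range). Propagating by the harmonic identity, every $p\in S_{n+1}\setminus S_n$ receives a value of the form $\sum_q c_{p,q}\,\widetilde f_{n+1}(q)$, the sum over shell points $q$, where the ``signature'' vector $c_{p,\cdot}$ has integer entries bounded by a polynomial in $n$ (these are the discrete harmonic-measure / ballot-type coefficients produced by iterating the $d$-dimensional recursion).

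The crux, and the part the authors rightly call tedious rather than conceptually hard, is injectivity of $f_{n+1}$ on $S_{n+1}$. Injectivity between $S_n$ and the new points is immediate from the tower growth. For distinct new points $p\neq q$ in $S_{n+1}\setminus S_n$: if the largest shell point with nonzero coefficient differs for $p$ and $q$ their values lie in different orders of magnitude and cannot coincide (here the tower must outgrow both the polynomial count $\Theta(n^{d-2})$ of points per magnitude class and the polynomial size of the coefficients, which iterated exponentials achieve); if it is the same, $f_{n+1}(p)=f_{n+1}(q)$ forces the signatures to agree coordinate-by-coordinate (read off ``base by base'' from the tower), so it suffices that $p\mapsto c_{p,\cdot}$ be injective on each magnitude class. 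This is the higher-dimensional analogue of properties~2--5 of Theorem~\ref{thm:ztwolabeling}: one writes down, or estimates with error terms shrinking like $\tfrac{i}{i+1}$ times the next-lower magnitude, the coefficient pattern on each face of the bipyramid, and checks that distinct faces use distinct dominant coefficients while within a face the remaining coefficients separate the points. Carrying out these coefficient estimates in general $d$ is the main obstacle; the rest transcribes from the planar argument.

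Finally, since $\bigcup_n S_n=\Z^d$ and $f_{n+1}$ extends $f_n$, the common extension $f$ is well defined and harmonic; it is injective by the stage-by-stage injectivity above and surjective because the bottom-shell filling values exhaust $\Z$, so $f$ is a harmonic labeling of $\Z^d$.
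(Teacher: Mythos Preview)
Your proposal is correct and follows essentially the same strategy as the paper's own proof sketch: span by the two slices $\Z^{d-1}\times\{0,-1\}$, place a tower of rapidly increasing values on the boundary of $[-n,n]^{d-1}$ in one slice and filling values in the other, extend harmonically to a (bi)pyramid, and argue injectivity magnitude-by-magnitude with the coefficient patterns on each face separating points. Your write-up is in fact somewhat more explicit than the paper's (e.g.\ the signature map $p\mapsto c_{p,\cdot}$ and the magnitude classes), but both leave the face-by-face coefficient verification as the acknowledged tedious-but-routine gap.
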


\section{Open problems}\label{sec:openprob}
\begin{enumerate}
\item\label{item:spanningquestion} Geometric properties of label spanning set. We say that a set $V'\subset V(G)$ is label spanning if using the harmonic property, given any values of $\phi$ on $V'$ we can extend $\phi$ to the entire graph. We saw several examples of finite label spanning sets (Cross, $\Z$ and the ladder). The first interesting question is are there any graphs other then several disconnected copies of $\Z$ which admit harmonic labeling spanned by a finite set? What can one tell about the graph from its spanning set?
\item Growth rate of harmonic labeling. Set some vertex as the origin $0$. Look at the set $A_n=\{\phi(x):d(x,0)\leq n\}$, where $d$ is the graph distance. What can one learn about a graph that admits harmonic labeling by the growth rate of $A_n$?
\item Are there any infinite graphs $G$ such that $G\times \Z$ doesn't admit harmonic labeling. Does any $G\times \Z$ where $G$ is infinite transitive admit harmonic labeling?
\item Is there a Cayley graph, which is not a finite extension of $\Z$, that doesn't admit harmonic labeling? What about the Heisenberg group? Other natural graphs to consider are hyperbolic lattices.
\item In Remark \ref{rmk:injecladder} we saw an example of a graph that does not admit harmonic labeling but has an injective harmonic function to $\Z$. Does any vertex transitive graph admit an injective harmonic function to $\Z$? Note that Dropping the injectivity constraint is easier as every graph that doesn't have a finite label spanning set admits a surjective harmonic function to $\Z$.
\end{enumerate}

\bibliography{Harmonic_Labeling_Edit_12}

\begin{thebibliography}{AABL09}

\bibitem[AABL09]{alon2009sums}
N.~Alon, O.~Angel, I.~Benjamini, and E.~Lubetzky.
\newblock {Sums and products along sparse graphs}.
\newblock {\em Arxiv preprint arXiv:0905.0135}, 2009.

\bibitem[YS94]{yau1994lectures}
S.T. Yau and R.~Schoen.
\newblock {\em {Lectures on differential geometry}}.
\newblock International Press Incorporated, 1994.

\end{thebibliography}
\bibliographystyle{alpha}
\end{document}